\newtheorem{theorem}{Theorem}[section]
\newtheorem{lemma}[theorem]{Lemma}
\newtheorem{proof of lemma}[theorem]{Proof of Lemma}
\theoremstyle{definition}
\newtheorem{definition}[theorem]{Definition}
\newtheorem{remark}[theorem]{Remark}
\numberwithin{equation}{section}
\begin{document}

\title[Heisenberg uniqueness pairs]
{Non-harmonic cones are Heisenberg uniqueness pairs for the Fourier transform on $\mathbb R^n$}

\author{R. K. Srivastava}

\address{Department of Mathematics, Indian Institute of Technology, Guwahati, India 781039.}
\email{rksri@iitg.ernet.in}

\subjclass[2000]{Primary 42A38; Secondary 44A35}

\date{\today}

\keywords{Bessel function, Fourier transform, spherical harmonics.}

\begin{abstract}
In this article, we prove that a cone is a Heisenberg uniqueness pair corresponding
to sphere as long as the cone does not completely recline on the level surface of any
homogeneous harmonic polynomial on $\mathbb R^n.$ We derive that
$\left(S^2, \text{ paraboloid}\right)$ and $\left(S^2, \text{ geodesic of } S_r(o)\right)$
are Heisenberg uniqueness pairs for a class of certain symmetric finite Borel measures in
$\mathbb R^3.$ Further, we correlate the problem of Heisenberg uniqueness pairs to the
sets of injectivity for the spherical mean operator.
\end{abstract}

\maketitle

\section{Introduction}\label{section1}

A Heisenberg uniqueness pair is a pair $\left(\Gamma, \Lambda\right)$, where $\Gamma$
is a surface in $\mathbb R^n$ and $\Lambda$ is a subset of $\mathbb R^n$ such that any 
finite Borel measure $\mu$ which is supported on $\Gamma$ and absolutely continuous 
with respect to the surface measure, whose Fourier transform $\hat\mu$ vanishes on 
$\Lambda,$ implies $\mu=0.$

\smallskip

In general, the existence of Heisenberg uniqueness pair (HUP) is a question
of asking about the determining properties of the finite Borel measures which
are supported on some lower dimensional entities whose Fourier transform also
vanishes on lower dimensional entities. In fact, the main contrast in the HUP
problem to the known results on determining sets for measures \cite{BS}
is that the set $\Lambda$ has also been considered as a very thin set.
In particular, if $\Gamma$ is compact, then $\hat\mu$ is real analytic,
having exponential growth, and hence $\hat\mu$ can vanishes on a very delicate
set. Thus, the HUP problem becomes little easier in this case. However,
this problem becomes immensely difficult when the measure is supported
on a non-compact entity. It appears that the HUP problem is a natural
invariant of the theme of the uncertainty principle for the Fourier
transform.

\smallskip

In addition, the concept of determining the Heisenberg uniqueness pair for a
class of finite measures has also a significant similarity with the celebrated
result due to M. Benedicks (see \cite{B}). That is, support of a function
$f\in L^1(\mathbb R^n)$ and its Fourier transform $\hat f$ both cannot be of
finite measure simultaneously. Later, various analogues of the Benedicks theorem
has been investigated in different aspects including the Heisenberg group and
Euclidean motion groups (see \cite{NR, PS, SST}).

\smallskip

However, our main objective in this article is to discuss the concept of HUP,
which was first introduced by Hedenmalm and Montes-Rodríguez in 2011.
In the article \cite{HR}, Hedenmalm and Montes-Rodr\'iguez have shown that the pair
(hyperbola, some discrete set) is a Heisenberg uniqueness pair. As a dual problem,
a weak$^\ast$ dense subspace of $L^{\infty}(\mathbb R)$  has been constructed to
solve the Klein-Gordon equation. Further, a complete characterization of the
Heisenberg uniqueness pairs corresponding to any two parallel lines has been
given by Hedenmalm and Montes-Rodr\'iguez (see \cite{HR}). Thereafter,
a considerable amount of work has been done pertaining to the Heisenberg
uniqueness pair in the plane as well as in the Euclidean spaces.

\smallskip

Recently, Lev \cite{L} and Sj\"{o}lin \cite{S1} have independently shown that
circle and certain system of lines are HUP corresponding to the unit circle $S^1.$
Further, Vieli \cite{V1} has generalized HUP corresponding to circle in the higher
dimension and shown that a sphere whose radius does not lie in the zero set of the
Bessel functions $J_{(n+2k-2)/2};~k\in\mathbb Z_+,$ the set of non-negative integers,
is a HUP corresponding to the unit sphere $S^{n-1}.$

\smallskip

Further, Sj\"{o}lin \cite{S2} has investigated some of the Heisenberg uniqueness
pairs corresponding to the parabola. Subsequently, Babot \cite{Ba} has given a
characterization of the Heisenberg uniqueness pairs corresponding to a certain
system of three parallel lines. Thereafter, the authors in \cite{GR} have given
some necessary and sufficient conditions for the Heisenberg uniqueness pairs
corresponding to a system of four parallel lines. In the latter case, a
phenomenon of three totally disconnected interlacing sets that are given by zero
sets of three trigonometric polynomials has been observed. However, an exact analogue
for the finitely many parallel lines as compared to three lines result \cite{Ba}
is still unsolved. In \cite{GR}, the authors have also investigated some of the
Heisenberg uniqueness pairs corresponding to the spiral, hyperbola, circle and the
exponential curves.

\smallskip

In a major development, Jaming and Kellay \cite{JK} have given a unifying proof
for some of the Heisenberg uniqueness pairs corresponding to the hyperbola, polygon,
ellipse and graph of the functions $\varphi(t)=|t|^\alpha,$ whenever $\alpha>0.$
Further, Gr\"{o}chenig and Jaming \cite{GJ} have worked out some of the Heisenberg
uniqueness pairs corresponding to the quadratic surface.

\smallskip

Let $\Gamma$ be a finite disjoint union of smooth curves in $\mathbb R^2.$
Let $X(\Gamma)$ be the space of all finite complex-valued Borel measure
$\mu$ in $\mathbb R^2$ which is supported on $\Gamma$ and absolutely
continuous with respect to the arc length measure on $\Gamma$. For
$(\xi,\eta)\in\mathbb R^2,$ the Fourier transform of $\mu$ is defined by
\[\hat\mu{( \xi,\eta)}=\int_\Gamma e^{-i\pi(x\cdot\xi+ y\cdot\eta)}d\mu(x,y).\]
In the above context, the function $\hat\mu$ becomes a uniformly continuous bounded
function on $\mathbb R^2.$ Thus, we can analyze the pointwise vanishing nature of the
function $\hat\mu.$
\begin{definition}
Let $\Lambda$ be a set in $\mathbb R^2.$ The pair $\left(\Gamma, \Lambda\right)$
is called a Heisenberg uniqueness pair for $X(\Gamma)$ if any $\mu\in X(\Gamma)$
satisfies $\hat\mu\vert_\Lambda=0,$ implies $\mu=0.$

\end{definition}
Since the Fourier transform is invariant under translation and rotation, one can
easily deduce the following invariance properties about the Heisenberg uniqueness
pair.
\smallskip

\begin{enumerate}[(i)]
\item Let $u_o, v_o\in\mathbb R^2.$ Then the pair $(\Gamma,\Lambda)$
is a HUP if and only if the pair $(\Gamma+ u_o,\Lambda+v_o)$ is a HUP.

\smallskip

\item Let $T : \mathbb R^2\rightarrow \mathbb R^2$ be an invertible
linear transform whose adjoint is denoted by $T^\ast.$  Then  $(\Gamma,\Lambda)$
is a HUP if and only if $\left(T^{-1}\Gamma,T^\ast\Lambda\right)$  is a HUP.
\end{enumerate}

Now, we would like to state the first known result about the Heisenberg
uniqueness pair due to Hedenmalm and Montes-Rodríguez \cite{HR}.

\begin{theorem}{\em\cite{HR}\label{th18}}
Let $\Gamma$ be the hyperbola $x_{1}x_{2}=1$ and $\Lambda_{\alpha,\beta}$ a lattice-cross
defined by
\[\Lambda_{\alpha,\beta}=\left(\alpha\mathbb Z\times\{0\}\right)\cup\left(\{0\}\times\beta\mathbb Z\right),\]
where $\alpha, \beta$ are positive reals. Then $\left(\Gamma,\Lambda_{\alpha,\beta}\right)$ is a Heisenberg
uniqueness pair if and only if $\alpha\beta\leq1$.
\end{theorem}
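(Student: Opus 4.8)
The plan is to follow the method of Hedenmalm and Montes-Rodr\'iguez, reducing the uniqueness statement to a rigidity property of a Gauss-type dynamical system, and to treat the two implications separately. First I would parametrise the hyperbola by $t\mapsto(t,1/t)$, $t\in\mathbb R\setminus\{0\}$. Since $\mu$ is absolutely continuous with respect to arc length, it is carried by this parametrisation to $f(t)\,dt$ with $f\in L^{1}(\mathbb R)$, and $\mu=0$ precisely when $f=0$ a.e.; moreover
\[
\hat\mu(\xi_{1},\xi_{2})=\int_{\mathbb R}e^{-i(\xi_{1}t+\xi_{2}/t)}\,f(t)\,dt .
\]
Restricting $\hat\mu$ to $\alpha\mathbb Z\times\{0\}$ yields the Fourier coefficients of the $\tfrac{2\pi}{\alpha}$-periodisation of $f$, while restricting to $\{0\}\times\beta\mathbb Z$ and substituting $s=1/t$ (which replaces $f$ by $g(s)=s^{-2}f(1/s)\in L^{1}(\mathbb R)$) yields the Fourier coefficients of the $\tfrac{2\pi}{\beta}$-periodisation of $g$. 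Hence, by Poisson summation, $\hat\mu|_{\Lambda_{\alpha,\beta}}=0$ is equivalent to the two identities
\[
\sum_{k\in\mathbb Z}f\!\left(t+\tfrac{2\pi k}{\alpha}\right)=0,\qquad \sum_{k\in\mathbb Z}g\!\left(t+\tfrac{2\pi k}{\beta}\right)=0\quad\text{a.e.},
\]
and, by invariance property $(ii)$ applied to the diagonal map $(x_{1},x_{2})\mapsto(\lambda x_{1},\lambda^{-1}x_{2})$ (which fixes $\Gamma$ and sends $\Lambda_{\alpha,\beta}$ to $\Lambda_{\lambda\alpha,\lambda^{-1}\beta}$), only the product $\alpha\beta$ matters.

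These identities say that $f$ lies in the kernel of the period-$\tfrac{2\pi}{\alpha}$ averaging operator while its inversion $g$ lies in the kernel of the period-$\tfrac{2\pi}{\beta}$ one; conjugating the second condition back through $t\mapsto1/t$, the pair becomes a joint invariance (up to the homogeneous weight $s^{-2}$) under the two parabolic M\"obius maps $t\mapsto t+\tfrac{2\pi}{\alpha}$ and $1/t\mapsto 1/t+\tfrac{2\pi}{\beta}$, which generate a subgroup of $\mathrm{PSL}(2,\mathbb R)$ --- equivalently, after a $45^{\circ}$ rotation of coordinates, $\hat\mu$ becomes a solution of the one-dimensional Klein--Gordon equation $(\partial_{t}^{2}-\partial_{x}^{2}+1)u=0$ vanishing on a spacetime lattice. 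In the normalisation where $\alpha\beta=1$ this is exactly the set-up of the classical Gauss continued-fraction map, and for $\alpha\beta\le1$ I would invoke the ergodicity of that map --- in the sharp form that its associated weighted transfer operator has no nonzero fixed point in $L^{1}$ --- to conclude that $f=0$, hence $\mu=0$ and $(\Gamma,\Lambda_{\alpha,\beta})$ is a Heisenberg uniqueness pair. Dually, this is the weak$^{\ast}$ density in $L^{\infty}(\mathbb R)$ of the linear span of $\{e^{i\alpha kt}:k\in\mathbb Z\}\cup\{e^{i\beta m/t}:m\in\mathbb Z\}$, the dense subspace referred to in the introduction.

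For the converse I would show that if $\alpha\beta>1$, so the periods $\tfrac{2\pi}{\alpha},\tfrac{2\pi}{\beta}$ are small, the orbit geometry of the group leaves room for a nonzero solution. Concretely, starting from a small bump $\phi$, the difference $\phi-\phi(\cdot-\tfrac{2\pi}{\alpha})$ already annihilates the first periodisation sum; one then modifies $\phi$ along a group orbit with alternating signs so that every translate occurring in either sum is matched by a cancelling translate supported on a disjoint interval, with $\ell^{1}$ coefficients so that the result is a genuine element of $L^{1}(\mathbb R)$. The corresponding $\mu\neq0$ satisfies $\hat\mu|_{\Lambda_{\alpha,\beta}}=0$, so the pair is not a HUP; equivalently the span above is no longer weak$^{\ast}$ dense.

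The step I expect to be the main obstacle is the borderline case $\alpha\beta=1$ together with converting the qualitative ergodicity of the Gauss-type map into the precise assertion that its weighted transfer operator has no nonzero $L^{1}$-eigenfunction at eigenvalue $1$ --- the relevant invariant measure being infinite over part of the parameter range, which blocks the most naive ergodic argument --- and, alongside this, carrying the cocycle weight $s^{-2}$ and the $2\pi$ from Poisson summation accurately enough to land the threshold exactly at $\alpha\beta=1$ rather than off by a constant.
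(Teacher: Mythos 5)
This statement is quoted background: the paper takes it verbatim from Hedenmalm and Montes-Rodr\'iguez \cite{HR} and contains no proof of it, so there is no internal argument to compare you against; what you have written is an outline of the original route of \cite{HR} (periodizations, Gauss-type dynamics, weak$^\ast$ duality). Your reduction is sound as far as it goes: restricting $\hat\mu$ to the two arms of the cross does force the vanishing of the $2\pi/\alpha$-periodization of $f$ and of the $2\pi/\beta$-periodization of $g(s)=s^{-2}f(1/s)$, and the diagonal scaling $(x_1,x_2)\mapsto(\lambda x_1,\lambda^{-1}x_2)$ shows that only the product $\alpha\beta$ matters.

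But as a proof the proposal has genuine gaps, and they sit exactly where the theorem lives. For $\alpha\beta\le1$ you ``invoke'' that the weighted transfer operator of the Gauss-type map has no nonzero fixed vector in $L^1$; this is not a citable fact but the main technical content of \cite{HR} --- ergodicity of the Gauss map by itself does not exclude signed $L^1$ solutions of the two periodization equations, and the critical case $\alpha\beta=1$ requires precisely the transfer-operator analysis you defer, as you yourself acknowledge. For $\alpha\beta>1$ the alternating-bump construction is only gestured at: every correction introduced to kill one periodization creates new terms in the other, so the corrections are indexed by the free group generated by the two parabolic maps $t\mapsto t+2\pi/\alpha$ and $1/t\mapsto 1/t+2\pi/\beta$, and one must prove that the resulting series converges in $L^1$ and annihilates both sums simultaneously; this is where the discreteness of that group for $\alpha\beta>1$ enters, and none of it is carried out. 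Finally, your worry about constants is warranted: with the kernel $e^{-i\xi\cdot\eta}$ used in this paper the periodization periods are $2\pi/\alpha$ and $2\pi/\beta$, whereas \cite{HR} state the threshold as $\alpha\beta\le4$ for the kernel $e^{i\pi x\cdot\xi}$, so the normalization bookkeeping must be done explicitly before one can assert that the cutoff falls at $\alpha\beta=1$ in the form stated here. In short: a correct road map of the known proof, but the decisive steps are asserted rather than proved.
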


For $\xi\in\Lambda,$ defining a function $e_\xi$ on $\Gamma$ by $e_{\xi}(x)=e^{i\pi x\cdot\xi}.$
As a dual problem to Theorem \ref{th18}, Hedenmalm and Montes-Rodr\'iguez \cite{HR} have proved
the following density result which in turn solve the one-dimensional Klein-Gordon equation.
\begin{theorem}{\em\cite{HR}\label{th18}}
The pair  $(\Gamma,\Lambda)$ is a Heisenberg uniqueness pair  if and only if
the set $\{e_{\xi}:~\xi\in\Lambda\}$ is a weak$^\ast$ dense subspace of
$L^{\infty}(\Gamma).$
\end{theorem}
\begin{remark}\label{rk6}
In particular, the HUP problem has another formulation. That is,  if $\Gamma$ is
the zero set of a polynomial $P$ on $\mathbb R^2,$ then $\hat\mu$ satisfies the
PDE $P\left(-i\partial\right)\hat\mu=0$ with initial condition $\hat\mu\vert_\Lambda=0.$
This may help potentially in determining the geometrical structure of the set $Z(\hat\mu),$
the zero set of the function $\hat\mu.$ If we consider $\Lambda$ to be contained in $Z(\hat\mu),$
then $\left(\Gamma, \Lambda\right)$ is not a HUP. Hence the question of the HUP arises when
$\Lambda$ has located away from $Z(\hat\mu).$
\end{remark}

\begin{definition}\label{def1}
A set $C$ in $\mathbb R^n~(n\geq 2)$ which satisfies the scaling condition
$\lambda C\subseteq C,$ for all $\lambda\in\mathbb R,$ is called a cone.
\end{definition}

Let $S^{n-1}$ denote unit sphere in $\mathbb R^n.$ In this article, we prove
that the pair $\left(S^{n-1}, C\right)$ is a Heisenberg uniqueness pair as
long as the cone $C$ does not recline on the level surface of any homogeneous harmonic
polynomial on $\mathbb R^n.$ We will call such cones as \textbf{\em non-harmonic}
cones.

\smallskip

An example of such a cone has been produced by Armitage (see \cite{A}). Let $0<\alpha<1$
and let $G_l^\lambda$ denote the Gegenbauer polynomial of degree $l$ and order $\lambda.$
Then \[K_\alpha=\left\{x\in\mathbb R^n:~|x_1|^2=\alpha^2|x|^2\right\}\] is a non-harmonic
cone if and only if $D^m G_l^{\frac{n-2}{2}}(\alpha)\neq0$ for all $0\leq m\leq l-2,$ where
$D^m$ denotes the $m$-th derivative.

\section{Notation and Preliminaries}\label{section2}
In this section, we recall certain standard facts about spherical harmonics. For more details
see \cite{T}, p. 12.

\smallskip

Let $K=SO(n)$ be the special orthogonal group and $M=SO(n-1).$ Let $\hat{K}_M$ denote
the set of all the equivalence classes of irreducible unitary representations of $K$
which have a nonzero $M$-fixed vector.  It is well known that each representation in
$\hat{K}_M$ has, in fact, a unique nonzero $M$-fixed vector, up to a scalar multiple.

\smallskip
For a $\sigma \in \hat{K}_M,$ which is realized on $V_{\sigma},$ let $\{e_1,\ldots ,e_{d(\sigma)}\}$
be an orthonormal basis of $V_{\sigma},$ with $e_1$ as the $M$-fixed vector.
Let $t^{ij}_{\sigma}(k)=\langle e_i,\sigma(k)e_j \rangle ,$ whenever $k\in K.$
By the Peter-Weyl Theorem for the representations of a compact group, it follows that
$\{\sqrt{d(\sigma)}t^{1j}_{k}:1\leq j\leq d(\sigma),\sigma\in\hat{K}_M\}$ is an
orthonormal basis of $L^2(K/M).$

\smallskip

We also need a concrete realization of the representations in $\hat{K}_M,$
which can be done in the following way.

\smallskip

Let $\mathbb Z_+$ denote the set of all non-negative integers. For $l\in \mathbb Z_+$,
let $P_l$ denote the space of all homogeneous polynomials $P$ in $n$ variables of
degree $l.$ Let $H_l=\{P\in P_l: \Delta P=0\},$ where $\Delta$ is the standard
Laplacian on $\mathbb R^n.$ The elements of $H_l$ are called solid spherical
harmonics of degree $l.$ It is easy to see that the natural action of $K$
leaves the space $H_l$ invariant. In fact, the corresponding unitary
representation $\pi_l$ is in $\hat{K}_M.$ Moreover, $\hat{K}_M$ can
be identified, up to unitary equivalence, with the collection
$\{\pi_l: l\in\mathbb Z_+.\}$

\smallskip

Define the spherical harmonics on the sphere $S^{n-1}$ by
$Y_{lj}(\omega )=\sqrt{d_l}t^{1j}_{\pi_l}(k),$ where
$\omega=k.e_n\in S^{n-1},$ $k\in K$ and $d_l$
is the dimension of $H_l.$  Then the set
$\widetilde H_l=\left\{Y_{lj}:1\leq j\leq d_l,l\in\mathbb Z_+\right\}$
forms an orthonormal basis for $L^2(S^{n-1}).$  Thus, we can expand
a suitable function $g$ on $S^{n-1}$ as
\begin{equation}\label{exp10}
g(\omega)=\sum_{l=0}^{\infty}\sum_{j=1}^{d_l}~a_{lj}Y_{lj}(\omega)
\end{equation}

\smallskip

For each fixed $\xi\in S^{n-1},$ define a linear functional on $\widetilde H_l$
by $\xi\mapsto Y_l(\xi).$ Then there exists a unique spherical harmonic,
say $Z_\xi^{(l)}\in H_l$ such that
\begin{equation}\label{exp1}
Y_l(\xi)=\int_{S^{n-1}}Z_\xi^{(l)}(\eta)Y_l(\eta)d\sigma(\eta).
\end{equation}
The spherical harmonic $Z_\xi^{(l)}$ is a $K$ bi-invariant real-valued function
which is constant on the geodesics orthogonal to the line joining the origin and
$\xi.$ The spherical harmonic $Z_\xi^{(l)}$ is called the zonal harmonic of the
space $\widetilde H_l$ around the point $\xi$ for the above and the various other
peculiar reasons. For more details, see \cite{SW}, p. 143.

\smallskip

Let $f$ be a function in $L^1(S^{n-1}).$ For each $l\in\mathbb Z_+,$ we
define the $l^{th}$ spherical harmonic projection of the function $f$ by
\begin{equation}\label{exp11}
\Pi_lf(\xi)=\int_{S^{n-1}}Z_\xi^{(l)}(\eta)f(\eta)d\sigma(\eta).
\end{equation}
Then the function $\Pi_lf$ is a spherical harmonic of degree $l.$ If for a $\delta>(n-2)/2,$ we denote
$A_l^m(\delta)=\binom{m-l+\delta}{\delta}{\binom{m+\delta}{\delta}}^{-1},$ then the spherical harmonic
expansion $\sum\limits_{l=0}^\infty\Pi_lf$ of the function $f\in L^1(\mathbb R^n)$ is $\delta$ -
Cesaro summable to $f.$ That is,
\begin{equation}\label{exp2}
f=\lim\limits_{m\rightarrow\infty}\sum_{l=0}^m~A_l^m(\delta)\Pi_lf,
\end{equation}
where limit on the right-hand side of (\ref{exp2}) exists in  $L^1\left(S^{n-1}\right).$
For more details see \cite{S}.

\smallskip
We would like to mention that the proof of our main result is carried out
by restricting the problem to the unit sphere $S^{n-1}$ in terms of averages of
its geodesic spheres. This is possible because the cone $C$ is closed under scaling.

\smallskip
For $\omega\in S^{n-1}$ and $t\in(-1, 1),$ the set $S_\omega^t=\left\{\nu\in S^{n-1}: \omega\cdot\nu=t\right\}$
is a geodesic sphere on $S^{n-1}$ with pole at $\omega.$ Let $f$ be an integrable function
on $S^{n-1}.$ Then by Fubini's Theorem, we can define the geodesic spherical means of the
function $f$ by
\[\tilde f(\omega, t)=\int_{S_\omega^t}f d\nu_{n-2},\]
where $\nu_{n-2}$ is the normalized surface measure on the geodesic sphere $S_\omega^t.$

\smallskip

Since the zonal harmonic $Z_\xi^{(l)}(\eta)$ is $K$ bi-invariant, there exists
a nice function $F$ in $(-1,1)$ such that $Z_\xi^{(l)}(\eta)=F(\xi\cdot\eta).$
Hence the extension of the formula (\ref{exp1}) becomes inevitable. An extension
of formula (\ref{exp1}) for the functions $F$ in $L^1(-1,1)$ was obtained. This
is known as the Funk-Hecke Theorem. That is,
\begin{equation}\label{exp3}
\int_{S^{n-1}}F(\xi\cdot\eta)Y_l(\eta)d\sigma(\eta)= C_lY_l(\xi),
\end{equation}
where the constant $C_l$ is given by
\[C_l=\alpha_l\int_{-1}^1F(t)G_l^{\frac{n-2}{2}}(t)(1-t^2)^{\frac{n-3}{2}}~dt\]
and $G_l^\beta$ stands for the Gegenbauer polynomial of degree $l$ and order $\beta.$
As a consequence of the Funk-Hecke Theorem, it can be deduced that the geodesic mean of a
spherical harmonic $Y_l$ can be expressed as
\begin{equation}\label{exp4}
\widetilde Y_l(\omega,t)=D_l(1-t^2)^{\frac{n-2}{2}}G_l^{\frac{n-2}{2}}(t)Y_l(\omega),
\end{equation}
where the constant $D_l=|S^{n-2}|/G_l^{\frac{n-2}{2}}(1)$ and $|S^{n-2}|$ denotes
the surface area of the unit sphere in $\mathbb R^{n-1}.$ For more details see
\cite{AAR}, p. 459. In order to prove the main result of this article, we need 
the following lemma, which percolates the geodesic mean vanishing condition of
$f\in L^1(S^{n-1})$ to each spherical harmonic component of $f$. For the class 
of continuous functions $C(S^{n-1}),$ this lemma has been worked in \cite{AVZ}. 
We prove in this article for $L^1(S^{n-1})$ using $\delta$ - Cesaro summation 
technique described above.
\begin{lemma}\label{lemma3}
Let  $f\in L^1(S^{n-1}).$ Then $\tilde f(\omega, t)=0~$ for all $t\in (-1, 1)$
if and only if $\Pi_lf(\omega)=0$ for all $l\in\mathbb Z_+.$
\end{lemma}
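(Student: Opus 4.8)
The plan is to reduce both implications of Lemma \ref{lemma3} to a single identity, valid for each fixed $\omega\in S^{n-1}$ and each $l\in\mathbb Z_+$: there is a \emph{nonzero} constant $c_{n,l}$, independent of $f$, with
\[
\Pi_lf(\omega)=c_{n,l}\int_{-1}^{1}G_l^{\frac{n-2}{2}}(t)\,\tilde f(\omega,t)\,(1-t^2)^{\frac{n-3}{2}}\,dt ,
\]
together with the fact that $t\mapsto\tilde f(\omega,t)(1-t^2)^{(n-3)/2}$ lies in $L^1(-1,1)$. Once this is available, the lemma drops out: the conditions $\Pi_lf(\omega)=0$ for all $l$ and $\tilde f(\omega,\cdot)=0$ both become statements about the vanishing of all the Gegenbauer moments of one fixed $L^1$ function on $(-1,1)$.

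To establish the displayed identity I start from $\Pi_lf(\omega)=\int_{S^{n-1}}Z_\omega^{(l)}(\eta)f(\eta)\,d\sigma(\eta)$, i.e.\ (\ref{exp11}). The zonal harmonic $Z_\omega^{(l)}$ is $K$ bi-invariant, so $Z_\omega^{(l)}(\eta)=F_l(\omega\cdot\eta)$ for a one-variable function $F_l$, and, as is classical, $F_l$ is a nonzero scalar multiple of the Gegenbauer polynomial $G_l^{(n-2)/2}$ (see \cite{SW}). I then disintegrate $d\sigma$ over the geodesic spheres $S_\omega^t$: writing $\eta=t\omega+\sqrt{1-t^2}\,\xi$ with $\xi\in S^{n-2}\subset\omega^{\perp}$, one has $d\sigma(\eta)=\kappa_n(1-t^2)^{(n-3)/2}\,dt\,d\nu_{n-2}(\xi)$ for a nonzero constant $\kappa_n$ — this is exactly the Fubini-type identity already invoked above to define $\tilde f$. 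Inserting $g(\eta)=F_l(\omega\cdot\eta)f(\eta)$ into $\int_{S^{n-1}}g\,d\sigma=\kappa_n\int_{-1}^{1}(1-t^2)^{(n-3)/2}\bigl(\int_{S_\omega^t}g\,d\nu_{n-2}\bigr)\,dt$ and using $\omega\cdot\eta\equiv t$ on $S_\omega^t$ yields the displayed identity; inserting $g=|f|$ yields $\int_{-1}^{1}|\tilde f(\omega,t)|(1-t^2)^{(n-3)/2}\,dt\le\kappa_n^{-1}\|f\|_{L^1(S^{n-1})}<\infty$. (Alternatively, the identity can be read off from (\ref{exp4}) applied to $Y_l=\Pi_lf$, combined with the $L^1(S^{n-1})$-Cesaro expansion (\ref{exp2}) and the boundedness of $f\mapsto\tilde f(\omega,\cdot)$ into the weighted space $L^1\bigl((-1,1),(1-t^2)^{(n-3)/2}dt\bigr)$.)

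The two directions are then routine. If $\tilde f(\omega,t)=0$ for (a.e.)\ $t\in(-1,1)$, the integral on the right of the displayed identity vanishes, so $\Pi_lf(\omega)=0$ for every $l$. Conversely, suppose $\Pi_lf(\omega)=0$ for all $l$ and set $g(t)=\tilde f(\omega,t)(1-t^2)^{(n-3)/2}\in L^1(-1,1)$. The displayed identity says $\int_{-1}^{1}g(t)G_l^{(n-2)/2}(t)\,dt=0$ for every $l$; since $G_l^{(n-2)/2}$ has exact degree $l$, the family $\{G_l^{(n-2)/2}\}_{l\ge0}$ spans all polynomials, and polynomials are uniformly dense in $C[-1,1]$ by the Weierstrass theorem. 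As $g\in L^1(-1,1)$, the functional $h\mapsto\int_{-1}^{1}gh$ is $\|\cdot\|_\infty$-continuous, hence it annihilates all of $C[-1,1]$, which forces $g=0$, i.e.\ $\tilde f(\omega,t)=0$ for a.e.\ $t\in(-1,1)$.

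The only genuine work is the preparatory step of the second paragraph: pinning down the disintegration of $d\sigma$ along geodesic spheres with the exact weight $(1-t^2)^{(n-3)/2}$, and recording that a zonal harmonic is a nonzero multiple of the corresponding Gegenbauer polynomial — both standard (cf.\ \cite{AAR, SW}), but to be stated cleanly, with the usual order-zero convention in the plane $n=2$, where $S_\omega^t$ is a two-point set and the lemma reduces to the even/odd decomposition of Fourier series on the circle. Finally, for general $f\in L^1(S^{n-1})$ the geodesic mean $\tilde f(\omega,\cdot)$ is defined only almost everywhere, so ``$\tilde f(\omega,t)=0$ for all $t$'' in the statement is to be read in the a.e.\ sense (it is genuinely pointwise when $f$ is continuous).
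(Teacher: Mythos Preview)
Your proposal is correct, and it proceeds along a genuinely different line from the paper. The paper argues via the Ces\`aro expansion \eqref{exp2}: it takes the geodesic mean term by term, invokes \eqref{exp4} to replace each $\widetilde{\Pi_lf}(\omega,t)$ by a Gegenbauer polynomial times $\Pi_lf(\omega)$, and then uses the orthogonality of the Gegenbauer polynomials together with $\lim_m A_l^m(\delta)=1$ to isolate each coefficient $\Pi_lf(\omega)$. Only the forward implication is written out; the converse is left implicit in the same Ces\`aro identity. Your approach bypasses Ces\`aro summability entirely: you write down directly the Funk--Hecke type identity
\[
\Pi_lf(\omega)=c_{n,l}\int_{-1}^{1}G_l^{\frac{n-2}{2}}(t)\,\tilde f(\omega,t)\,(1-t^2)^{\frac{n-3}{2}}\,dt
\]
from the disintegration of $d\sigma$ and the explicit form of the zonal harmonic, and then both directions fall out --- one trivially, the other by Weierstrass density of polynomials in $C[-1,1]$ against the $L^1$ function $g(t)=\tilde f(\omega,t)(1-t^2)^{(n-3)/2}$. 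Your route is more elementary and more self-contained (no summability theory, both implications treated symmetrically), and it is also more careful about the a.e.\ nature of $\tilde f(\omega,\cdot)$ for general $f\in L^1$; the paper's route has the virtue of staying inside the spherical-harmonic apparatus already assembled, but at the cost of the extra Ces\`aro step and an implicit interchange of limit and geodesic mean.
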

Notice that as a corollary to Lemma \ref{lemma3}, it can be deduced that if
$\tilde f(\omega, t)=0$ for all $t\in(-1, 1),$ then $f=0$ a.e. on $S^{n-1}$
if and only if $\omega$ is not contained in the zero set of any homogeneous
harmonic polynomial.
\begin{proof}
By the hypothesis, we have $\tilde f(\omega, t)=0~$ for all $t\in (-1, 1).$
Now, by taking geodesic mean in (\ref{exp2}) and then using (\ref{exp4}),
we arrive at
\begin{equation}\label{exp5}
\lim\limits_{m\rightarrow\infty}\sum_{k=0}^m~A_l^m(\delta)C_lG_l^{\frac{n-2}{2}}(t)\Pi_lf(\omega)=0.
\end{equation}
Since the set $\left\{G_l^{\frac{n-2}{2}}: ~l\in\mathbb Z_+\right\}$ form an orthonormal set
on $(-1,1)$ with weight $\left(1-t^2\right)^{-1/2},$ from (\ref{exp5}) it follows that
\[\lim\limits_{m\rightarrow\infty}A_l^m(\delta)C_l\left\|G_l^{\frac{n-2}{2}}\right\|_2^2\Pi_lf(\omega)=0.\]
By using the fact that for each fixed $l,$ we have $\lim\limits_{m\rightarrow\infty}A_l^m(\delta)=1,$
we conclude that $\Pi_lf(\omega)=0$ for all $l\in\mathbb Z_+.$  In particular, if $\omega$
is not contained in $Y_l^{-1}(0)$ for all $l\in\mathbb Z_+,$ then $f(\omega)=0$ a.e.
on $S^{n-1}.$  This completes the proof of Lemma \ref{lemma3}.
\end{proof}

\section{Proofs of the main result}\label{section3}
In this section, we first prove that a non-harmonic cone is a Heisenberg 
uniqueness pair corresponding to the unit sphere.

\begin{theorem}\label{th11}
Let $\Lambda=C$ be a cone in $\mathbb R^n.$ Then $\left(S^{n-1},\Lambda\right)$
is a Heisenberg uniqueness pair if and only if $\Lambda$ is not contained in
$P^{-1}(0),$  whenever $P\in H_l$ and $l\in\mathbb Z_+.$
\end{theorem}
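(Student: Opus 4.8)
The plan is to reduce the vanishing of $\hat\mu$ on the cone $C$ to a statement about the geodesic spherical means of the density of $\mu$, and then apply Lemma \ref{lemma3}. Write $d\mu = f\,d\sigma$ with $f\in L^1(S^{n-1})$. For $\xi = r\omega$ with $r>0$ and $\omega\in S^{n-1}$, the Fourier transform is
\[
\hat\mu(r\omega) = \int_{S^{n-1}} e^{-ir\,\omega\cdot\eta} f(\eta)\,d\sigma(\eta).
\]
First I would expand $f$ in spherical harmonics, $f=\sum_{l}\sum_{j} a_{lj}Y_{lj}$, and recall the classical Bochner-type identity: applying the Funk--Hecke theorem (\ref{exp3}) with the plane-wave kernel $F(t)=e^{-irt}$, which on $(-1,1)$ lies in $L^1$, gives
\[
\int_{S^{n-1}} e^{-ir\,\omega\cdot\eta} Y_l(\eta)\,d\sigma(\eta) = c_n\,(-i)^l\, r^{-\frac{n-2}{2}} J_{l+\frac{n-2}{2}}(r)\, Y_l(\omega),
\]
so that $\hat\mu(r\omega)=\sum_l c_n(-i)^l r^{-(n-2)/2} J_{l+(n-2)/2}(r)\,\Pi_l f(\omega)$, where $\Pi_l f$ is the projection from (\ref{exp11}). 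The scaling invariance of $C$ is what makes this useful: if $r\omega\in C$ then $r'\omega\in C$ for every $r'>0$, so the hypothesis $\hat\mu|_C=0$ says that for every $\omega$ in the "direction set" $\Omega = \{\omega\in S^{n-1} : \mathbb{R}_+\omega\subseteq C\}$ (equivalently $C\setminus\{0\} = \mathbb{R}_+\cdot\Omega$), the entire function $r\mapsto\sum_l c_n(-i)^l r^{-(n-2)/2} J_{l+(n-2)/2}(r)\,\Pi_l f(\omega)$ vanishes on a half-line, hence identically in $r$.

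The next step is to extract from "$\sum_l (-i)^l r^{-(n-2)/2} J_{l+(n-2)/2}(r)\,\Pi_l f(\omega) \equiv 0$ in $r$" the conclusion that $\Pi_l f(\omega)=0$ for every $l$. This is exactly the Fourier-Bessel / Hankel-transform uniqueness statement that underlies Lemma \ref{lemma3} in disguise; indeed, rather than redoing Bessel asymptotics by hand, I would route it through the geodesic-mean machinery already set up. Write $g_\omega(t)=\tilde f(\omega,t)$; taking geodesic means and using (\ref{exp4}), $\hat\mu(r\omega)$ is (up to the elementary radial factor coming from integrating the plane wave over $S_\omega^t$) the one-dimensional Fourier transform of the function $t\mapsto (1-t^2)^{(n-2)/2} g_\omega(t)$ on $(-1,1)$; vanishing of this transform on a half-line forces $g_\omega\equiv 0$ on $(-1,1)$. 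By Lemma \ref{lemma3}, $g_\omega\equiv 0$ is equivalent to $\Pi_l f(\omega)=0$ for all $l$. So from $\hat\mu|_C=0$ we obtain $\Pi_l f(\omega)=0$ for every $l\in\mathbb Z_+$ and every $\omega\in\Omega$.

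To finish, fix $l$ and consider the spherical harmonic $\Pi_l f\in H_l$. We have shown $\Pi_l f$ vanishes on the direction set $\Omega$ of the cone. If the cone $C$ is non-harmonic, then by definition $\Omega$ (hence $C$) is not contained in $P^{-1}(0)$ for any $P\in H_m$, $m\geq 1$; in particular $\Omega\not\subseteq(\Pi_l f)^{-1}(0)$ unless $\Pi_l f\equiv 0$. Hence $\Pi_l f\equiv 0$ for every $l$, and by the Cesàro expansion (\ref{exp2}) we get $f=0$ a.e., i.e.\ $\mu=0$; this gives one direction. For the converse, if $C\subseteq P^{-1}(0)$ for some nonzero $P\in H_l$, then taking $d\mu = P|_{S^{n-1}}\,d\sigma$ — a nonzero measure — the same Bochner identity shows $\hat\mu(r\omega) = c_n(-i)^l r^{-(n-2)/2}J_{l+(n-2)/2}(r)\,P(\omega)$, which vanishes on all of $C$; so $(S^{n-1},C)$ is not a HUP. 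The main obstacle I anticipate is the rigorous justification of the termwise manipulations — interchanging the infinite spherical-harmonic sum with the $r$-integration / geodesic averaging, and controlling convergence — which is why leaning on the Cesàro summability (\ref{exp2}) and the already-proved Lemma \ref{lemma3}, rather than on raw series identities, is the safe route; the harmonic-polynomial geometry at the end is then immediate from the definition of non-harmonic cone.
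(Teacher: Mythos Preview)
Your proposal is correct and follows essentially the same route as the paper: reduce $\hat\mu(r\omega)=0$ for all $r>0$ to the vanishing of the one-dimensional Fourier transform of the (compactly supported) geodesic mean $t\mapsto\tilde f(\omega,t)$, invoke analyticity to get $\tilde f(\omega,\cdot)\equiv 0$, apply Lemma~\ref{lemma3} to obtain $\Pi_l f(\omega)=0$ for all $l$, and finish with the non-harmonic condition; the converse via the Funk--Hecke identity (\ref{exp16}) is also identical. Your opening detour through the Bessel-series expansion is unnecessary---the paper bypasses it entirely and goes straight to the geodesic decomposition, which (as you yourself note) avoids the termwise-convergence issues.
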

\begin{proof}
Since $\mu$ is absolutely continuous with respect to the surface measure on $S^{n-1},$
by Radon-Nikodym theorem, there exists a function $f$ in $L^{1}\left(S^{n-1}\right)$
such that $d\mu=f(\eta)d\sigma(\eta),$ where $d\sigma$ is the normalized surface measure
on $S^{n-1}.$ Suppose $\hat\mu\vert_\Lambda= 0.$ Then
\begin{equation}\label{exp6}
\hat\mu(\xi)=\int_{S^{n-1}} e^{-i{\xi\cdot\eta}}f(\eta)d\sigma(\eta)=0
\end{equation}
for all $\xi\in S^{n-1}.$ Let $\xi=r\omega,$ where $r>0$ and $\omega\in S^{n-1}.$
By decomposing the integral in (\ref{exp6}) into the
geodesic spheres at pole $\omega,$ we get
\[\int_{-1}^1\left(\int_{S_\omega^t} e^{-i{r \omega\cdot\nu}}f(\nu)d\sigma_{n-2}(\nu)\right)dt=0,\]
where $S_\omega^t=\left\{\nu\in S^{d-1}: \omega\cdot\nu=t\right\}.$  That is,
\begin{equation}\label{exp7}
\int_{-1}^1e^{i r t}\tilde f(\omega, t)dt=0,
\end{equation}
for all $r>0.$ Since $f\in L^{1}\left(S^{n-1}\right),$ the geodesic mean
$\tilde f(\omega, t)$ will be a continuous function on $(-1,1).$ Thus for each fixed $\omega,$
the left-hand side of (\ref{exp7}) can be viewed as the Fourier transform of the
compactly supported function $\tilde f(\omega, .)$ on $\mathbb R.$ Hence, it can be
extended holomorphically to $\mathbb C.$ Then, in this case, the Fourier transform of
$\tilde f(\omega, .)$ can vanish at most on a countable set. Thus, by the continuity
of $\tilde f(\omega, .)$ it follows that $\tilde f(\omega, t)=0$ for all $t\in (-1, 1).$
Hence, in view of Lemma \ref{lemma3}, we conclude that $f=0$ a.e. on $S^{n-1}$ if and
only if $\omega$ is not contained in $Y_l^{-1}(0)$ for all $l\in\mathbb Z_+.$ Since the
cone $\Lambda$ is closed under scaling, we infer that $f=0$ a.e. if and only if $\Lambda$
is not contained in $P^{-1}(0)$ for any $P\in H_l$ and for all $l\in\mathbb Z_+.$
Thus $\mu=0.$
\smallskip

Conversely, suppose the cone $C$ is contained in the zero set of a homogeneous harmonic
polynomial, say $P_j\in H_l.$ Then, we can construct a finite complex Borel measure $\mu$
in $\mathbb R^n$ such that  $d\mu=Y_j(\eta)d\sigma(\eta),$ where $Y_j\in\widetilde H_l.$

\smallskip

Using the Funk-Hecke Theorem, it has been shown that for spherical harmonic
$Y_j\in\widetilde H_l,$ the following identity holds.
\begin{equation}\label{exp16}
\int_{S^{n-1}} e^{-i{x\cdot\eta}}Y_j(\eta)d\sigma(\eta)=i^j~\frac{J_{j+(n-2)/2}(r)}{r^{(n-2)/2}}Y_j(\xi),
\end{equation}
where $x=r\xi,$ for some $r>0.$ For a proof of identity (\ref{exp16}), see \cite{AAR}, p. 464.
This in turn implies that $\hat\mu\vert_C= 0.$

\end{proof}

\begin{remark}\label{rk1}
$\textbf{(a).}$
A set which is determining set for any real analytic function is called $NA$ - set.
For instance, the spiral is an $NA$ - set in the plane (see \cite{PS_0}). The set
\[\Lambda_\varphi=\left\{(x_1,x_2,x_3): x_3(x_1^2+x_2^2)=x_1\varphi(x_3)\right\},\]
where function $\varphi$ is given by $\varphi(x_3)=\exp{\dfrac{1}{x_3^2-1}}\,,$
for $|x_3|<1$ and $0$ otherwise. The set $\Lambda_\varphi$ is an $NA$ - set. For more details
see \cite{PS_0}. Since the Fourier transform of a finite Borel measure $\mu$ which is supported
on the boundary $\partial\Omega$ of a bounded domain $\Omega$ in $\mathbb R^n$ can
be extended holomorphically to $\mathbb C^n,$ the pair $(\partial\Omega,\text{NA - set})$
is a Heisenberg uniqueness pair. However, the converse is not true. Hence, all together with
the result of Vieli \cite{V1}, it is an interesting question to examine,
whether the exceptional sets for the HUPs corresponding to $\Gamma=S^{n-1},$ are
eventually contained in the zero sets of all homogeneous harmonic polynomials and the
countably many spheres whose radii are contained in the zero set of the certain class
of Bessel functions. We leave it open for the time being.

\smallskip

\noindent $\textbf{(b).}$ For $\Gamma=S^{n-1},$ it is easy to verify that $\hat\mu$
satisfies Helmholtz's equation
\begin{equation}\label{exp8}
\Delta\hat\mu + \hat\mu=0
\end{equation}
with initial condition $\hat\mu\vert_\Lambda=0.$ For a continuous function $f$ on
$\mathbb R^n$ ($n\geq2),$ the spherical mean $Rf$ of $f$ over the sphere
$S_r(x)=\{y\in\mathbb R^n:~ |x-y|=r\}$ is defined by
\[Rf(x,r)=\int_{S_r(x)}f(y)d\sigma_r(y),\]
where $d\sigma_r$ is the normalized surface measure on the sphere $S_r(x).$
Then $\hat\mu$ will satisfy the functional equation
\begin{equation}\label{exp9}
R\hat\mu(x,r)=c_n\frac{J_{(n-2)/2}(r)}{r^{(n-2)/2}}\hat\mu(x).
\end{equation}
Thus, we infer that  $\hat\mu(x)=0$ if and only if $R\hat\mu(x,r)=0$ for all $r>0.$

\smallskip

In an interesting article by Zalcman et al. \cite{AVZ}, it is shown that
for $f$ to be continuous function on $\mathbb R^n$ if $Rf(x,r)=0$ for all $r>0$
and for all $x\in C,$ then $f\equiv0$ if and only if $C$ is a non-harmonic cone
in $\mathbb R^n.$ In integral geometry, such sets are called sets of injectivity
for the spherical means. We do not digress here to give more history of sets of
injectivity for the spherical means in various set ups, still, we would like to
refer to \cite{ABK,AK,AN2,AQ,AR,AVZ,NRR,NT1, Sri, Sri2, Sri3}. However, this is
an incomplete list of the articles on the sets of injectivity.

\smallskip

Thus, in view of the above result, it follows that $\hat\mu\equiv0$
if and only if $C$ is a non-harmonic cone in $\mathbb R^n.$ As $\mu$ is
a signed measure, we again need to go through the proof of  Theorem \ref{th11},
in order to show that $\mu=0.$

\smallskip
Now, consider $\Lambda$ to be an arbitrary set in $\mathbb R^n.$  Then, it is clear that
$(S^{n-1}, \Lambda)$  is HUP if and only if $\Lambda$ is a set of injectivity for
spherical mean over a class of certain real analytic functions. However, the
latter problem is yet not settled.
\end{remark}

\section{Some observations for a special class of measures in $\mathbb R^3$}\label{section4}

In this section, we shall prove that the paraboloid is a HUP corresponding to
the unit sphere $S^2$ in $\mathbb R^3$ for a class of finite Borel measure which
are given by certain symmetric functions in $L^1(S^2).$ Further, we prove that
a geodesic on the sphere $S_R(o)$ is a HUP corresponding to $S^2$ for the
above class of measures.

\smallskip

We need the following lemma for proofs of our results of this section.

\begin{lemma}\label{lemma1}
Let $f\in L^1\left(S^{n-1}\right)$ be such that
$\int\limits_{S^{n-1}} e^{-i{x\cdot\eta}}f(\eta)d\sigma(\eta)=0.$
Then
\begin{equation}\label{exp12}
\lim\limits_{m\rightarrow\infty}\sum_{k=0}^mi^kA_k^m~\frac{J_{k+(n-2)/2}( r)}{r^{(n-2)/2}}~\Pi_kf(\xi)=0,
\end{equation}
where $x=r\xi,$ for some $r>0$ and $\xi\in S^{n-1}.$
\end{lemma}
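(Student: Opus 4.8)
\textbf{Proof proposal for Lemma \ref{lemma1}.}
The plan is to start from the vanishing Fourier transform
$\int_{S^{n-1}} e^{-ix\cdot\eta} f(\eta)\,d\sigma(\eta)=0$ and substitute the
Ces\`aro-summable spherical harmonic expansion \eqref{exp2} of $f$, namely
$f=\lim_{m\to\infty}\sum_{k=0}^m A_k^m(\delta)\Pi_k f$, where the limit is taken
in $L^1(S^{n-1})$. Since for fixed $x=r\xi$ the function
$\eta\mapsto e^{-ix\cdot\eta}$ is bounded on $S^{n-1}$, integration against
$e^{-ix\cdot\eta}$ is a bounded linear functional on $L^1(S^{n-1})$, so it
commutes with the $L^1$-limit; this lets me pull the limit and the finite sum
outside the integral and obtain
$\lim_{m\to\infty}\sum_{k=0}^m A_k^m(\delta)\int_{S^{n-1}} e^{-ix\cdot\eta}\,\Pi_k f(\eta)\,d\sigma(\eta)=0$.

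Next I would evaluate each inner integral. Because $\Pi_k f$ is a spherical
harmonic of degree $k$, identity \eqref{exp16} (the Funk--Hecke computation
from \cite{AAR}, p.~464) applies verbatim with $Y_j$ replaced by $\Pi_k f$,
giving
\[
\int_{S^{n-1}} e^{-ix\cdot\eta}\,\Pi_k f(\eta)\,d\sigma(\eta)
= i^k\,\frac{J_{k+(n-2)/2}(r)}{r^{(n-2)/2}}\,\Pi_k f(\xi),
\]
where $x=r\xi$. Substituting this into the displayed limit immediately yields
\eqref{exp12}. The one bookkeeping point is to make sure the $\delta$-Ces\`aro
weights $A_k^m(\delta)$ in \eqref{exp2} match the coefficients $A_k^m$ written
in the statement of the lemma; I would simply take $A_k^m:=A_k^m(\delta)$ for a
fixed admissible $\delta>(n-2)/2$, so the two notations coincide and no further
argument is needed.

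The main (and essentially only) obstacle is the interchange of the limit with
the integral: one must be careful that the convergence in \eqref{exp2} is in
$L^1(S^{n-1})$ and not merely pointwise, so that testing against the bounded
kernel $e^{-ix\cdot\eta}$ is legitimate and the resulting equality holds for
every fixed $r>0$ and $\xi\in S^{n-1}$. Once that is in place, the rest is a
direct application of \eqref{exp16} term by term, and the proof is complete.
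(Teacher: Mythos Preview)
Your proposal is correct and follows essentially the same route as the paper: the paper subtracts $f$ (using the hypothesis $\int e^{-ix\cdot\eta}f\,d\sigma=0$) and bounds $\bigl|\int e^{-ix\cdot\eta}\bigl(\sum_{k=0}^m A_k^m\Pi_k f-f\bigr)d\sigma\bigr|\le \|\sum_{k=0}^m A_k^m\Pi_k f-f\|_{L^1}\to0$ by \eqref{exp2}, which is exactly your ``bounded linear functional on $L^1$ commutes with the $L^1$-limit'' step, and then applies \eqref{exp16} termwise just as you do. Your remark identifying $A_k^m$ with $A_k^m(\delta)$ is also what the paper tacitly assumes.
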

\begin{proof}\
We have
\begin{eqnarray*}
&&  \left|\sum_{k=0}^mA_k^m\int_{S^{n-1}} e^{- i{x\cdot\eta}}\Pi_kf(\eta)d\sigma(\eta)\right| \\
&=& \left|\sum_{k=0}^m\int_{S^{n-1}}e^{- i{x\cdot\eta}}(A_k^m\Pi_kf\left(\eta)-f(\eta)\right)d\sigma(\eta)\right|\\
&\leq & \sum_{k=0}^m\int_{S^{n-1}}\left|(A_k^m\Pi_kf\left(\eta)-f(\eta)\right)\right|d\sigma(\eta).
\end{eqnarray*}
In view of (\ref{exp2}), it follows that
\begin{equation}\label{exp18}
\lim\limits_{m\rightarrow\infty}\sum_{k=0}^mA_k^m\int_{S^{n-1}} e^{- i{x\cdot\eta}}\Pi_kf(\eta)d\sigma(\eta)=0.
\end{equation}
This in turn from (\ref{exp16}) implies that (\ref{exp12}) holds.
\end{proof}

We know that for $n=3,$ a typical spherical harmonic of degree $k$ can be
expressed as $Y_k^l(\theta,\varphi)=e^{il\varphi}P_k^l(\cos\theta),$ where
$P_k^l$'s are the associated Legendre functions. In fact, the set
$\{Y_k^l:\, -k\leq l\leq k\}$ forms an orthonormal basis for $\widetilde H_k,$
(see \cite{Su}, p. 91). Hence, the $k$-th spherical harmonic projection $\Pi_kf$
can be expressed as
\[\Pi_kf(\theta,\varphi)=\sum_{l=-k}^kC_k^l(f)e^{il\varphi}P_k^l(\cos\theta),\]
where $0\leq\theta<\pi$ and $0\leq\varphi<2\pi.$ Thus, an integrable function $f$
on $S^2$ has the spherical harmonic expansion as
\begin{equation}\label{exp13}
f(\theta,\varphi)=\sum_{k=0}^\infty\sum_{l=-k}^kC_k^l(f)e^{il\varphi}P_k^l(\cos\theta).
\end{equation}
Let $L^1_{\text{sym}}(S^2)$ denotes the space of all those functions $f$ in $L^1(S^2)$
that satisfy a set of {\em symmetric-coefficient conditions} $C_k^l(f)=C_{k'}^l(f),$
for $|l|\leq\min\{k, k'\}.$

\begin{theorem}\label{th9}
Let $\Lambda=\{(x_1,x_2,x_3):~ x_3=x_1^2+x_2^2\}.$ Then $\left(S^2,\Lambda\right)$ is
a Heisenberg uniqueness pair with respect to $L^1_{\text{sym}}(S^2).$
\end{theorem}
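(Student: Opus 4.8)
The plan is to run the mechanism of the proof of Theorem \ref{th11}, but since the paraboloid $\Lambda$ is not a cone and hence carries no scaling invariance, the symmetric-coefficient hypothesis will play the part that closure of $C$ under scaling played there. By the Radon--Nikodym theorem write $d\mu=f\,d\sigma$ with $f\in L^1_{\text{sym}}(S^2)$, and suppose $\hat\mu\vert_\Lambda=0$. First I would parametrize $\Lambda$ cylindrically: a point of $\Lambda$ is $(\rho\cos\phi,\rho\sin\phi,\rho^2)$, which in polar form equals $r\xi$ with $\xi=(\sin\theta\cos\phi,\sin\theta\sin\phi,\cos\theta)$, $\rho=\cot\theta$ and $r=r(\theta)=\cos\theta/\sin^2\theta$; as $(\theta,\phi)$ runs over $(0,\pi/2)\times[0,2\pi)$ this sweeps out $\Lambda\setminus\{o\}$, and crucially $\theta\mapsto r(\theta)$ is a strictly decreasing bijection of $(0,\pi/2)$ onto $(0,\infty)$. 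So the hypothesis reads $\hat\mu\bigl(r(\theta)\xi(\theta,\phi)\bigr)=0$ for all such $\theta,\phi$.

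Next I would apply Lemma \ref{lemma1} with $n=3$ (so $(n-2)/2=1/2$). By (\ref{exp16}) the partial sums $S_m(\theta,\phi):=\sum_{k=0}^m i^k A_k^m\frac{J_{k+1/2}(r(\theta))}{\sqrt{r(\theta)}}\,\Pi_kf(\xi(\theta,\phi))$ equal $\int_{S^2}e^{-ir(\theta)\xi\cdot\eta}(\sigma_mf)(\eta)\,d\sigma(\eta)$, where $\sigma_mf=\sum_{k\le m}A_k^m\Pi_kf\to f$ in $L^1(S^2)$ by (\ref{exp2}); hence $S_m\to\hat\mu(r(\theta)\xi(\theta,\phi))=0$ uniformly in $\phi$ (indeed in $(\theta,\phi)$), since $\lvert S_m-\hat\mu\rvert\le\lVert\sigma_mf-f\rVert_{L^1(S^2)}$. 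The symmetric-coefficient condition means $C_k^l(f)=c_l$ is independent of $k\ge|l|$, so $\Pi_kf(\xi(\theta,\phi))=\sum_{|l|\le k}c_l e^{il\phi}P_k^l(\cos\theta)$; taking the $l$-th Fourier coefficient in $\phi$ in the relation $S_m\to0$ and passing the uniform limit through the $\phi$-integral yields, for each $l\in\mathbb Z$,
\[ c_l\,\Psi_l(r)=0\quad\text{for all }r\in(0,\infty),\qquad \Psi_l(r):=\lim_{m\to\infty}\sum_{k=|l|}^{m}i^kA_k^m\,\frac{J_{k+1/2}(r)}{\sqrt{r}}\,P_k^l\bigl(\cos\theta(r)\bigr), \]
with $\theta(r)$ the inverse of $r(\theta)$. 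It then suffices to show $\Psi_l\not\equiv0$ for each $l$; this forces $c_l=0$ for all $l$, hence every $C_k^l(f)$ vanishes, $f=0$ a.e., and $\mu=0$.

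For the last step I would analyse $\Psi_l(r)$ as $r\to0^+$, which is the regime $\theta(r)\to\pi/2$, $\cos\theta(r)\to0$. From $\lvert J_\nu(r)\rvert\le(r/2)^\nu/\Gamma(\nu+1)$ one gets $\frac{J_{k+1/2}(r)}{\sqrt r}=\frac{(r/2)^k}{\sqrt2\,\Gamma(k+3/2)}\bigl(1+O(r^2)\bigr)$ for each fixed $k$, so the index $k=|l|$ dominates: since $A_{|l|}^m\to1$ and the continuity value $P_{|l|}^l(0)=(-1)^l(2|l|-1)!!$ for $l\ge0$ (a nonzero constant in all cases) is nonzero, the $k=|l|$ term of $\Psi_l$ contributes $i^{|l|}\frac{P_{|l|}^l(0)}{2^{|l|+1/2}\Gamma(|l|+3/2)}\,r^{|l|}(1+o(1))$, while the tail $\sum_{k>|l|}$ is $O(r^{|l|+1})$ \emph{uniformly in $m$} --- here one uses $\lvert A_k^m\rvert\le1$, the Bessel bound above, and a crude polynomial estimate $\sup_{[-1,1]}\lvert P_k^l\rvert\le k^{2|l|}$ (Markov's inequality), so that the tail series converges with an extra power of $r$. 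Hence $\Psi_l(r)\sim i^{|l|}\frac{P_{|l|}^l(0)}{2^{|l|+1/2}\Gamma(|l|+3/2)}\,r^{|l|}$ as $r\to0^+$, which is nonzero for small $r>0$, contradicting $\Psi_l\equiv0$ unless $c_l=0$. The main obstacle is precisely this uniform-in-$m$ tail estimate for the Cesaro-summed series defining $\Psi_l$ near $r=0$: one must guarantee the leading $k=|l|$ contribution is not destroyed in the limit in $m$; the decay $1/\Gamma(k+3/2)$ of the Bessel coefficients is what makes this manageable, but it is the delicate point, together with the interchange of $\lim_m$ with the $\phi$-integration (settled by the uniform convergence $S_m\to\hat\mu$ noted above).
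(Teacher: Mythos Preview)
Your argument is correct and takes a genuinely different route from the paper's. The paper, after reaching (\ref{exp14}), multiplies by $\overline{\Pi_df(\theta,\varphi)}$ and integrates in $\varphi$, invoking the orthogonality identity (\ref{exp15}) that the symmetric-coefficient hypothesis makes available; this is meant to produce a nonnegative combination of the quantities $\|\Pi_kf(\theta,\cdot)\|_2^2$ equal to zero, after which two separate countability arguments (Bessel zeros in $r$, then Legendre zeros in $\cos\theta$) kill each $C_d^l(f)$. You instead take a single Fourier coefficient in $\phi$, use $C_k^l(f)=c_l$ to factor out $c_l$, and are left with showing that the explicit scalar series $\Psi_l(r)$ is not identically zero; you do this by a small-$r$ asymptotic in which the lowest Bessel order $k=|l|$ dominates. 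Your approach trades the paper's positivity/orthogonality step for a direct estimate, and it makes transparent exactly where the geometry of the paraboloid enters (the bijection $r\leftrightarrow\theta$, so that $r\to 0^+$ is available and $\cos\theta(r)\to 0$ there, with $P_{|l|}^l(0)\neq 0$). The uniform-in-$m$ tail bound you flag as the delicate point is indeed the crux, and your treatment --- $|A_k^m|\le 1$, the bound $|J_{k+1/2}(r)|\le (r/2)^{k+1/2}/\Gamma(k+3/2)$, and a polynomial-in-$k$ bound on $\sup_{[-1,1]}|P_k^l|$ for fixed $l$ --- is sufficient, since the $1/\Gamma(k+3/2)$ decay overwhelms the polynomial growth and leaves an extra factor of $r$ in the tail.
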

\begin{proof}
Since $\mu$ is absolutely continuous with respect to the surface measure on $S^2,$
there exists a function $f\in L_{\text{sym}}^{1}\left(S^{2}\right)$ such that
$d\mu=f(\eta)d\sigma(\eta),$ where $d\sigma$ is the normalized surface measure on $S^{2}.$
Suppose $\widehat\mu\vert_\Lambda= 0.$ Then
\begin{equation}\label{exp19}
\int_{S^{2}} e^{- i{\xi\cdot\eta}}f(\eta)d\sigma(\eta)=0
\end{equation}
for all $\xi\in S^2.$ Now, consider the spherical polar co-ordinates $x_1=r\sin\theta\cos\varphi,$
$~x_2=r\sin\theta\sin\varphi$ and $x_3=r\cos\theta,$ where $0\leq\theta<\pi$ and $0\leq\varphi<2\pi.$
Then, in view of Lemma \ref{lemma1}, Equation (\ref{exp19}) becomes
\begin{equation}\label{exp14}
\lim\limits_{m\rightarrow\infty}\sum_{k=0}^mi^kA_k^m\,J_{\frac{k+1}{2}}( r)~\Pi_kf(\theta, \varphi)=0
\end{equation}
for all $\varphi\in[0, 2\pi).$
Notice that the rotation $\varphi$ is independent of the choice of $r,$ because,
the paraboloid is completely determined by $\cos\theta=r\sin^2\theta.$ Since the set
$\{e^{il\varphi}:~l\in\mathbb Z_+\}$ form an orthonormal set in $L^2[0, 2\pi)$ and
$f\in L^1_{\text{sym}}(S^2),$ a simple calculation gives
\begin{equation}\label{exp15}
\int_0^{}\Pi_kf(\theta, \varphi)\overline{\Pi_df(\theta, \varphi)}d\varphi=\left\{
\begin{array}{ll}
\left\|\Pi_kf(\theta, .)\right\|_2^2  , & \text{~~~~~if~}  k<d \\
                           & \\
\left\|\Pi_df(\theta, .)\right\|_2^2  , & \text{~~~~~if~}  k\geq d.
\end{array}
\right.
\end{equation}
After multiplying (\ref{exp14}) by $\overline{\Pi_df(\theta, \varphi)}$ and
using (\ref{exp15}), we conclude that
\[\lim\limits_{m\rightarrow\infty}\left[\sum_{k=0}^{d-1}A_k^m\left|J_{\frac{k+1}{2}}(r)\right|^2\left\|\Pi_kf(\theta, .)\right\|_2^2+
\sum_{k=d}^{m}A_k^m\left|J_{\frac{k+1}{2}}(r)\right|^2\left\|\Pi_df(\theta, .)\right\|_2^2\right]=0.\]
Thus, using the fact that $\lim\limits_{m\rightarrow\infty}A_k^m=1$ and the second sum
goes to zero as $d\rightarrow\infty,$ we obtain that
\[\sum_{l=0}^{\infty}\left|J_{\frac{l+1}{2}}( r)\right|^2\left\|\Pi_lf(\theta, .)\right\|_2^2=0.\]
That is,
$|J_{\frac{l+1}{2}}( r)|\left\|\Pi_lf(\theta, .)\right\|_2=0$
for all $r>0.$ Since the Bessel functions can have at most countably many zeros,
it follows that
\[\Pi_lf(\theta,\varphi)=\sum_{d=-l}^lC_d^l(f)e^{id\varphi}P_l^d(\cos\theta)=0.\]
This in turn, because of orthogonality of the set $\{e^{il\varphi}:~l\in\mathbb Z_+\},$
implies that $C_d^l(f)P_l^d(\cos\theta)=0.$  However, on the paraboloid, we have
$\cos\theta=r\sin^2\theta,$ which gives $\cos\theta=\frac{-1+\sqrt{1+4r^2}}{2r}.$
Since the Legendre functions can vanish only at countably many points, it follows
that $C_d^l(f)=0$ for all $d$ with $-l\leq d\leq l.$ That is, $\Pi_lf=0$
for all $l\in\mathbb Z_+.$ Thus $f=0$ a.e. This completes the proof.
\end{proof}

\begin{remark}\label{rk2}
\noindent We observe that Theorem \ref{th9} could be extended to
higher dimensions in a similar way. However, to avoid the complexities of
notation and calculation, we prove the result for $n=3.$
\end{remark}

Next, we prove that a geodesic sphere which is parallel to the equator of
the sphere $S_R(o)$ is a HUP corresponding to the unit sphere $S^2$
with respect to $L^1_{\text{sym}}(S^2).$

\begin{theorem}\label{th8}
Let $\Lambda_{\alpha, R}=\{(\alpha, \varphi):~R\cos\alpha=r \text{ and }0\leq\varphi<2\pi\}.$
Then $\left(S^2,\Lambda_{\alpha, R}\right)$ is a HUP if and only if $J_{\frac{l+1}{2}}(R)\neq0$
for all $l\in\mathbb Z_+$ and the ratio $r/R$ is not contained in the zero set of any Legendre
function.
\end{theorem}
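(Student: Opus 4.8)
The plan is to run the argument of Theorem~\ref{th9} essentially verbatim; in fact the present situation is simpler, since a circle of latitude on $S_R(o)$ freezes the radial variable at the constant $R$ and the colatitude at the constant $\alpha$, leaving only the longitude $\varphi$ free, whereas for the paraboloid $r$ varied with $\theta$.

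By Radon--Nikodym write $d\mu=f\,d\sigma$ with $f\in L^1_{\text{sym}}(S^2)$, and write a generic point of $\Lambda_{\alpha,R}$ as $x=R\xi$ with $\xi=(\sin\alpha\cos\varphi,\sin\alpha\sin\varphi,\cos\alpha)$, so that $|x|=R$ and the colatitude of $\xi$ equals $\alpha$, where $\cos\alpha=r/R$. Assuming $\widehat\mu|_{\Lambda_{\alpha,R}}=0$, Lemma~\ref{lemma1} applied with Bessel argument equal to the constant $R$ gives
\[
\lim_{m\to\infty}\sum_{k=0}^m i^k A_k^m\,J_{\frac{k+1}{2}}(R)\,\Pi_k f(\alpha,\varphi)=0,\qquad \varphi\in[0,2\pi),
\]
the exact analogue of~(\ref{exp14}). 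Multiplying by $\overline{\Pi_d f(\alpha,\varphi)}$, integrating in $\varphi$ over $[0,2\pi)$, using the symmetric-coefficient orthogonality~(\ref{exp15}) together with $\lim_m A_k^m=1$, and letting $d\to\infty$ exactly as in the proof of Theorem~\ref{th9}, I would obtain
\[
\sum_{l=0}^\infty\bigl|J_{\frac{l+1}{2}}(R)\bigr|^2\,\bigl\|\Pi_l f(\alpha,\cdot)\bigr\|_2^2=0,
\]
and hence $J_{\frac{l+1}{2}}(R)\,\|\Pi_l f(\alpha,\cdot)\|_2=0$ for all $l\in\mathbb Z_+$. If $J_{\frac{l+1}{2}}(R)\neq0$ for every $l$, this forces $\Pi_l f(\alpha,\cdot)\equiv0$, i.e. $\sum_{|d|\le l}C_l^d(f)e^{id\varphi}P_l^d(\cos\alpha)=0$ for every $\varphi$; orthogonality of $\{e^{id\varphi}\}$ then yields $C_l^d(f)\,P_l^d(\cos\alpha)=0$, and since $\cos\alpha=r/R$ is not a zero of any Legendre function we get $C_l^d(f)=0$ for all $l$ and all $|d|\le l$, so $f=0$ a.e. and $\mu=0$. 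This is the ``if'' direction.

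For the converse I would argue contrapositively via~(\ref{exp16}). If $J_{\frac{l_0+1}{2}}(R)=0$ for some $l_0$, take $d\mu=Y_{l_0}\,d\sigma$ for a nonzero $Y_{l_0}\in\widetilde H_{l_0}$; then $\widehat\mu(x)=i^{l_0}|x|^{-1/2}J_{\frac{l_0+1}{2}}(|x|)\,Y_{l_0}(x/|x|)$ vanishes on the entire sphere $|x|=R$, hence on $\Lambda_{\alpha,R}$, while $\mu\neq0$. If instead $r/R$ is a zero of some Legendre function, say $P_{l_0}^{d_0}(r/R)=0$, take $d\mu=Y_{l_0}^{d_0}\,d\sigma$ with $Y_{l_0}^{d_0}(\theta,\varphi)=e^{id_0\varphi}P_{l_0}^{d_0}(\cos\theta)$; then $\widehat\mu(x)=i^{l_0}|x|^{-1/2}J_{\frac{l_0+1}{2}}(|x|)\,Y_{l_0}^{d_0}(x/|x|)$, and on $\Lambda_{\alpha,R}$ the colatitude of $x/|x|$ is $\alpha$ with $\cos\alpha=r/R$, so the factor $P_{l_0}^{d_0}(\cos\alpha)$ vanishes and $\widehat\mu|_{\Lambda_{\alpha,R}}=0$ with $\mu\neq0$. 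In both cases $(S^2,\Lambda_{\alpha,R})$ is not a HUP.

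The point requiring most care, and the step I expect to be the main obstacle, is the compatibility of the converse with the restriction to $L^1_{\text{sym}}(S^2)$: a single spherical harmonic $Y_{l_0}$ (or $Y_{l_0}^{d_0}$) is \emph{not} an element of $L^1_{\text{sym}}(S^2)$, since the symmetric-coefficient condition $C_k^l(f)=C_{k'}^l(f)$ links all degrees $k\ge|l|$. To obtain a genuinely symmetric counterexample one must instead assemble a series $f=\sum_l c_l\bigl(\sum_{k\ge|l|}e^{il\varphi}P_k^l(\cos\theta)\bigr)$, check via generating-function estimates for $\sum_k P_k^l$ that it defines a nonzero function in $L^1(S^2)$, and verify that $\widehat\mu$ still vanishes on $\Lambda_{\alpha,R}$; this, together with the usual justification of interchanging the $m$-limit with the $\varphi$-integration in the forward direction, is the delicate part, the rest being a routine transcription of the proof of Theorem~\ref{th9}.
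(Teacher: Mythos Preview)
Your forward direction is exactly the paper's: it simply says ``as similar to the proof of Theorem~\ref{th9}, we can reach the conclusion that $|J_{\frac{l+1}{2}}(R)|\,\|\Pi_l f(\alpha,\cdot)\|_2=0$,'' and then runs the same Fourier-in-$\varphi$ and Legendre-value argument you wrote out. Your converse via~(\ref{exp16}) with a single spherical harmonic is also precisely what the paper does: it takes $d\mu=e^{il\varphi}P_k^l(\cos\theta)\,d\sigma$ and appeals to~(\ref{exp16}).

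The point you single out as ``the main obstacle'' --- that a single spherical harmonic does \emph{not} belong to $L^1_{\text{sym}}(S^2)$, so the converse counterexample as written does not live in the class for which the forward direction is proved --- is a genuine observation, but the paper does not address it either. Its converse uses exactly the non-symmetric measure you worry about and stops there. So your proposal already reproduces the paper's proof in full; the extra care you propose (building a symmetric counterexample by summing over $k\ge|l|$) goes beyond what the paper provides, and the paper offers no help on whether such a series converges in $L^1(S^2)$ to something nonzero.
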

\begin{proof}
Suppose $\widehat\mu\vert_{\Lambda_{\alpha, R}}= 0.$  Then similarly
the proof of Theorem \ref{th9}, we reach the conclusion that
$|J_{\frac{l+1}{2}}( R)|\left\|\Pi_lf(\alpha, .)\right\|_2=0.$
Then $\left\|\Pi_lf(\alpha, .)\right\|_2=0$ for all $l\in\mathbb Z_+$
if $|J_{\frac{l+1}{2}}( R)|\neq0$ for all $l\in\mathbb Z_+.$ That is,
\[\Pi_lf(\alpha,\varphi)=\sum_{d=-l}^lC_d^l(f)e^{id\varphi}P_l^d(\cos\alpha)=0.\]
By the uniqueness of the Fourier series, it follows that
$C_d^l(f)P_l^d\left(\frac{r}{R}\right)=0.$ Then $C_d^l(f)=0$ if
$P_l^d\left(\frac{r}{R}\right)\neq0.$ Under the assumptions of the
hypothesis, we conclude that $\Pi_lf=0$ for all $l\in\mathbb Z_+.$
Thus $f=0.$

\smallskip

Conversely, if either of the conditions of Theorem \ref{th8} fails, then for the measure
$d\mu=e^{il\varphi}P_k^l(\cos\theta)d\sigma(\theta, \varphi),$ it follows from the Funk-Hecke
identity (\ref{exp16}) that $\widehat\mu\vert_{\Lambda_{\alpha, R}}= 0.$ This complete
the proof.
\end{proof}

\begin{remark}\label{rk3}
It is reasonable to mention that if Theorem \ref{th8} can be extended to a general
class of finite Borel measures, then this result would have a sharp contrast, in terms
of the topological dimension of the pairing set, with the known results for HUP corresponding
to sphere.
\end{remark}

\smallskip

\noindent{\bf Concluding remarks:}

\smallskip

In this article, we have shown that $(S^{n-1}, C)$ is a HUP as long as the cone $C$
is not contained in the zero set of any homogeneous harmonic polynomial. Now, it is
natural to consider a compact subgroup $K$ of $SO(n)$ with $K_o$ the orbit of $K$
around the origin. Let $\Gamma_K=K/K_o.$ We know that a unitary irreducible representation
of $SO(n)$ can be decomposed into finitely many irreducible representations of $K.$ Thus,
the action of the group $K$ on a spherical harmonic $Y_l$ on the unit sphere $S^{n-1}$ will
decompose $Y_l$ uniquely into a finite sum of spherical harmonics. Therefore, it would be an
interesting question to find out the possibility that $(\Gamma_K, C)$ is a HUP as long as the
cone $C$ does not recline on the level surface of any $K$ - invariant homogeneous polynomial.
We leave this question open for the time being.

\smallskip

\noindent{\bf Acknowledgements:}

\smallskip

The author wishes to thank E. K. Narayanan and Rama Rawat for several fruitful discussions.
The author would also like to gratefully acknowledge the support provided by IIT Guwahati,
Government of India.

\bigskip



\begin{thebibliography}{1000}
\bibitem{ABK} M. L. Agranovsky, C. Berenstein and P. Kuchment, {\em Approximation by spherical waves in
$L^p$-spaces,} J. Geom. Anal. 6 (1996), no. 3, 365--383 (1997).

\bibitem{AN2} M. L. Agranovsky and E.K.  Narayanan, {\em Injectivity of the spherical mean operator on the
conical manifolds of spheres,} Siberian Math. J. 45 (2004), no. 4, 597 - 605.

\bibitem{AQ} M. L. Agranovsky and E. T. Quinto, {\em Injectivity sets for the Radon transform over circles
and complete systems of radial functions,} J. Funct. Anal.,  139  (1996),  no. 2, 383--414.

\bibitem{AR} M. L. Agranovsky and R. Rawat, {\em Injectivity sets for spherical means on the Heisenberg group,}
J. Fourier Anal. Appl., 5 (1999), no. 4, 363--372.

\bibitem{AVZ} M. L. Agranovsky, V. V. Volchkov and L. A. Zalcman, {\em Conical uniqueness sets for the
spherical Radon transform,} Bull. London Math. Soc. 31 (1999), no. 2, 231-–236.

\bibitem{AK}  G. Ambartsoumian and P. Kuchment, {\em On the injectivity of the circular Radon transform,}
 Inverse Problems 21 (2005), no. 2, 473--485.
 
\bibitem{AAR} G. E. Andrews, R. Askey and R. Roy, {\em Special Functions,} Cambridge University Press, Cambridge, 1999.

\bibitem{A} D. H. Armitage, {\em Cones on which entire harmonic functions can vanish,}
Proc. Roy. Irish Acad. Sect. A 92 (1992), no. 1, 107–110.

\bibitem{Ba} D. B. Babot, {\em Heisenberg uniqueness pairs in the plane, Three parallel lines,}
Proc. Amer. Math. Soc. 141 (2013), no. 11, 3899-3904.

\bibitem{BS} S. C. Bagchi and A. Sitaram, {\em  Determining sets for measures on $\mathbb R^n,$}
Illinois J. Math. 26 (1982), no. 3, 419-422.

\bibitem{B} M. Benedicks, {\em On Fourier transforms of functions supported on sets of finite Lebesgue measure,}
J. Math. Anal. Appl. 106 (1985), no.1, 180-183.

\bibitem{CH} R. Courant and D. Hilbert, {\em Methods of mathematical physics,} Vol. 2, Interscience, New York, 1962.

\bibitem{GR} D. K. Giri and R. K. Srivastava, {\em Heisenberg uniqueness pairs for some algebraic curves in the plane,}
Adv. Math., 310 (2017), 993-1016.

\bibitem{GJ} K. Gr\"{o}chenig and  P. Jaming, {\em The Cram\'{e}r-Wold theorem on quadratic surfaces and Heisenberg uniqueness pairs,}
arXiv:1608.06738.

\bibitem{HR} H. Hedenmalm, A. Montes-Rodr\'iguez, {\em Heisenberg uniqueness pairs and the Klein-Gordon equation,}
 Ann. of Math. (2) 173 (2011), no. 3, 1507-1527.

\bibitem{JK} P. Jaming and K. Kellay, {\em A dynamical system approach to Heisenberg uniqueness pairs,}
arXiv:1312.6236, July, 2014.

\bibitem{L} N. Lev, {\em Uniqueness theorem for Fourier transform. Bull. Sc. Math.,}
135(2011), 134-140.

\bibitem{NR} E. K. Narayanan and P. K. Ratnakumar, {\em Benedick's theorem for the Heisenberg group,}
Proc. Amer. Math. Soc. 138 (2010), no. 6, 2135-2140.

\bibitem{NRR} E. K. Narayanan, R. Rawat and S. K.  Ray, {\em Approximation by $K$-finite functions in
$L\sp p$ spaces,} Israel J. Math.  161  (2007), 187-207.

\bibitem{NT1} E. K. Narayanan and S. Thangavelu, {\em Injectivity sets for spherical means on the
Heisenberg group,} J. Math. Anal. Appl. 263 (2001), no. 2, 565-579.

\bibitem{PS_0} V. Pati and A. Sitaram, {\em Some questions on integral geometry on Riemannian manifolds,}
 Ergodic theory and harmonic analysis (Mumbai, 1999). Sankhya Ser. A 62 (2000), no. 3, 419-424.

\bibitem{PS} F. J. Price and A. Sitaram, {\em Functions and their Fourier transforms with supports
of finite measure for certain locally compact groups,} J. Funct. Anal. 79 (1988), no. 1, 166–182.

\bibitem{SST} A. Sitaram, M. Sundari and S. Thangavelu, {\em Uncertainty principles on certain Lie groups,}
Proc. Indian Acad. Sci. Math. Sci. 105 (1995), no. 2, 135-151.

\bibitem{S1} P. Sj\"olin, {\em Heisenberg uniqueness pairs and a theorem of Beurling and Malliavin,} Bull. Sc. Math.,
135(2011), 125-133.

\bibitem{S2} P. Sj\"olin, {\em Heisenberg uniqueness pairs for the parabola. Jour. Four. Anal. Appl.,}
19(2013), 410-416.

\bibitem{S} C. D. Sogge, {\em Oscillatory integrals and spherical harmonics,} Duke Math. J. 53 (1986), 43-65.

\bibitem{Sri} R. K. Srivastava, {\em Sets of injectivity for weighted twisted spherical means and support theorems,}
J. Fourier Anal. Appl.,  18 (2012), no. 3, 592--608.

\bibitem{Sri2} R. K. Srivastava, {\em Coxeter system of lines and planes are sets of injectivity for the twisted
spherical means,} J. Funct. Anal. 267 (2014) 352–383.

\bibitem{Sri3} R. K. Srivastava, {\em Real analytic expansion of spectral projection and extension of Hecke-Bochner identity,}
 Israel J. Math. 200 (2014), 1-22.
 
\bibitem{SW} E. M. Stein and G. Weiss, {\em Introduction to Fourier analysis on Euclidean spaces,}
 Princeton Mathematical Series, No. 32. Princeton University Press (1971).

\bibitem{Su} M. Sugiura, {\em Unitary representations and harmonic analysis}, North-Holland Mathematical Library, 44.
North-Holland Publishing Co., Amsterdam; Kodansha, Ltd., Tokyo, 1990.

\bibitem{T} S. Thangavelu, {\em An introduction to the uncertainty principle}, Prog. Math. 217, Birkhauser, Boston (2004).

\bibitem{V1} F. J. G. Vieli,  {\em A uniqueness result for the Fourier transform of measures on the sphere,}
Bull. Aust. Math. Soc. 86(2012), 78-82.

\bibitem{V2} F. J. G. Vieli,  {\em A uniqueness result for the Fourier transform of measures on the paraboloid,}
 Matematicki Vesnik, Vol. 67(2015), No. 1, 52-55.

\end{thebibliography}
\end{document}